\newcommand{\sysn}{\left\{\begin{array}{rcl}}
\newcommand{\sysk}{\end{array}\right.}
\newcommand{\ingrw}[2]{\includegraphics[width=#1mm]{#2}}
\newtheorem{theorem}{Theorem}[section]
\newtheorem{lemma}[theorem]{Lemma}
\theoremstyle{example}
\theoremstyle{definition}
\newtheorem{definition}[theorem]{Definition}
\newtheorem{corollary}[theorem]{Corollary}
\journal{Topology and its Applications}
\begin{document}

\begin{frontmatter}

\title{The functional characterizations of the
Rothberger and Menger properties}


\author{Alexander V. Osipov}

\ead{OAB@list.ru}


\address{Krasovskii Institute of Mathematics and Mechanics, Ural Federal
 University,

 Ural State University of Economics, Yekaterinburg, Russia}

\begin{abstract} For a Tychonoff space $X$, we denote by $C_p(X)$
the space of all real-valued continuous functions on $X$ with the
topology of pointwise convergence. In this paper we continue to
study different selectors for sequences of dense sets of $C_p(X)$
started to study in the paper \cite{os2}.

A set $A\subseteq C_p(X)$ will be called {\it $1$-dense} in
$C_p(X)$, if for each $x\in X$ and an open set $W$ in $\mathbb{R}$
there is $f\in A$ such that $f(x)\in W$.

We give the characterizations of selection principles
$S_{1}(\mathcal{A},\mathcal{A})$,
$S_{fin}(\mathcal{A},\mathcal{A})$ and
$S_{1}(\mathcal{S},\mathcal{A})$  where

$\bullet$ $\mathcal{A}$ --- the family of $1$-dense subsets of
$C_p(X)$;

$\bullet$ $\mathcal{S}$ --- the family of sequentially dense
subsets  of $C_p(X)$, through the selection principles of a space
$X$. In particular, we give the functional characterizations of
the Rothberger and Menger properties.


\end{abstract}

\begin{keyword}

$S_1(\mathcal{O},\mathcal{O})$    \sep
$S_{fin}(\mathcal{O},\mathcal{O})$    \sep
$S_{1}(\mathcal{A},\mathcal{A})$ \sep
$S_{1}(\mathcal{S},\mathcal{A})$ \sep
$S_{fin}(\mathcal{A},\mathcal{A})$ \sep function spaces \sep
selection principles  \sep $C_p$ theory \sep Scheepers Diagram
\sep Rothberger property \sep  Menger property

\MSC[2010]  54C35 \sep 54C05 \sep 54C65   \sep 54A20

\end{keyword}

\end{frontmatter}



\section{Introduction}

Throughout this paper, all spaces are assumed to be Tychonoff. The
set of positive integers is denoted by $\mathbb{N}$. Let
$\mathbb{R}$ be the real line, we put $\mathbb{I}=[0,1]\subset
\mathbb{R}$, and $\mathbb{Q}$ be the rational numbers. For a space
$X$, we denote by $C_p(X)$ the space of all real-valued continuous
functions on $X$ with the topology of pointwise convergence. The
symbol $\bf{0}$ stands for the constant function to $0$.

Basic open sets of $C_p(X)$ are of the form

$[x_1,...,x_k, U_1,...,U_k]=\{f\in C(X): f(x_i)\in U_i$,
$i=1,...,k\}$, where each $x_i\in X$ and each $U_i$ is a non-empty
open subset of $\mathbb{R}$. Sometimes we will write the basic
neighborhood of the point $f$ as $\langle f,A,\epsilon \rangle$
where $\langle f,A,\epsilon \rangle:=\{g\in C(X):
|f(x)-g(x)|<\epsilon$ $\forall x\in A\}$, $A$ is a finite subset
of $X$ and $\epsilon>0$.

 If $X$ is a space and $A\subseteq X$, then the sequential closure of $A$,
 denoted by $[A]_{seq}$, is the set of all limits of sequences
 from $A$. A set $D\subseteq X$ is said to be sequentially dense
 if $X=[D]_{seq}$. A space $X$ is called sequentially separable if
 it has a countable sequentially dense set.

In this paper, by a cover we mean a nontrivial one, that is,
$\mathcal{U}$ is a cover of $X$ if $X=\bigcup \mathcal{U}$ and
$X\notin \mathcal{U}$.

 An open cover $\mathcal{U}$ of a space $X$ is:

 $\bullet$ an {\it $\omega$-cover} if every finite subset of $X$ is contained in a
 member of $\mathcal{U}$.

$\bullet$ a {\it $\gamma$-cover} if it is infinite and each $x\in
X$ belongs to all but finitely many elements of $\mathcal{U}$.
Note that every $\gamma$-cover contains a countably
$\gamma$-cover.

For a topological space $X$ we denote:

$\bullet$ $\mathcal{O}$ --- the family of open covers of $X$;


$\bullet$ $\Gamma$ --- the family of countable open
$\gamma$-covers of $X$;




$\bullet$ $\Omega$ --- the family of open $\omega$-covers of $X$;



$\bullet$ $\mathcal{D}$ --- the family of dense subsets of
$C_p(X)$;

$\bullet$ $\mathcal{S}$ --- the family of sequentially dense
subsets of $C_p(X)$.

\bigskip

Many topological properties are defined or characterized in terms
 of the following classical selection principles.
 Let $\mathcal{A}$ and $\mathcal{B}$ be sets consisting of
families of subsets of an infinite set $X$. Then:

$S_{1}(\mathcal{A},\mathcal{B})$ is the selection hypothesis: for
each sequence $(A_{n}: n\in \mathbb{N})$ of elements of
$\mathcal{A}$ there is a sequence $\{b_{n}\}_{n\in \mathbb{N}}$
such that for each $n$, $b_{n}\in A_{n}$, and $\{b_{n}:
n\in\mathbb{N} \}$ is an element of $\mathcal{B}$.

$S_{fin}(\mathcal{A},\mathcal{B})$ is the selection hypothesis:
for each sequence $(A_{n}: n\in \mathbb{N})$ of elements of
$\mathcal{A}$ there is a sequence $\{B_{n}\}_{n\in \mathbb{N}}$ of
finite sets such that for each $n$, $B_{n}\subseteq A_{n}$, and
$\bigcup_{n\in\mathbb{N}}B_{n}\in\mathcal{B}$.

$U_{fin}(\mathcal{A},\mathcal{B})$ is the selection hypothesis:
whenever $\mathcal{U}_1$, $\mathcal{U}_2, ... \in \mathcal{A}$ and
none contains a finite subcover, there are finite sets
$\mathcal{F}_n\subseteq \mathcal{U}_n$, $n\in \mathbb{N}$, such
that $\{\bigcup \mathcal{F}_n : n\in \mathbb{N}\}\in \mathcal{B}$.

\medskip

Many equivalences hold among these properties, and the surviving
ones appear in the following Diagram (where an arrow denotes
implication), to which no arrow can be added except perhaps from
$U_{fin}(\Gamma, \Gamma)$ or $U_{fin}(\Gamma, \Omega)$ to
$S_{fin}(\Gamma, \Omega)$ \cite{jmss}.

\medskip

\begin{center}
\ingrw{90}{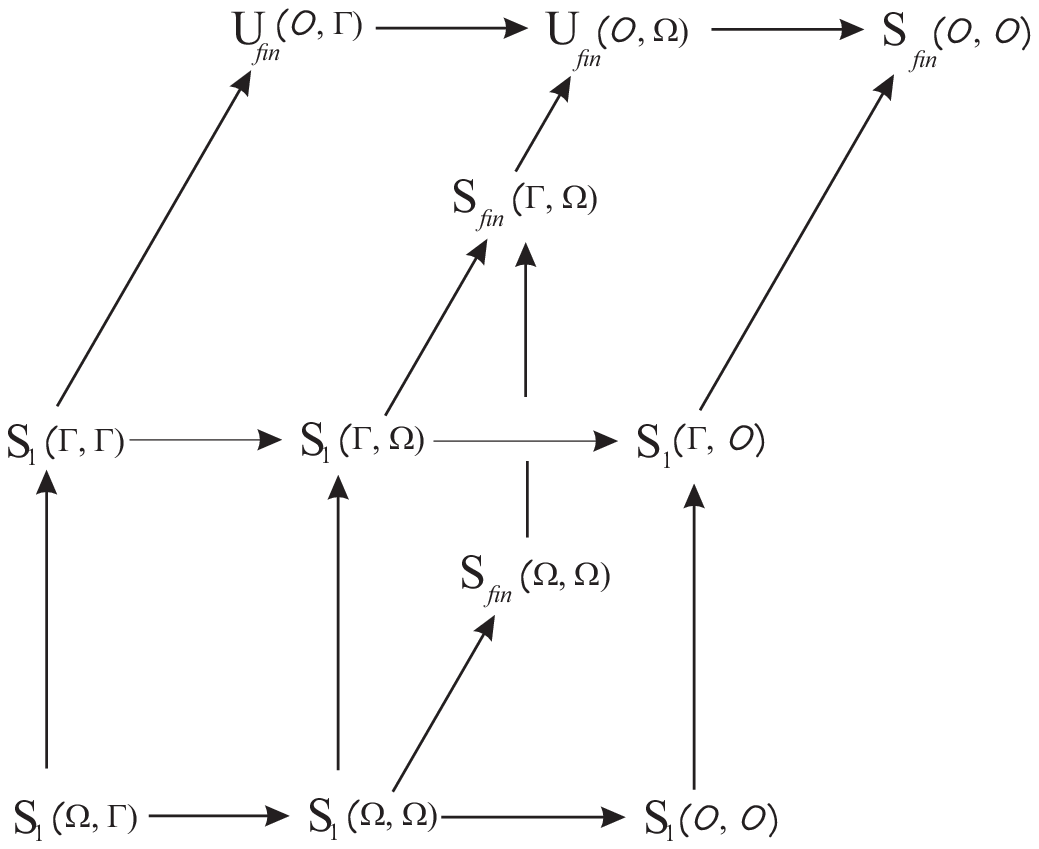}

\medskip

Fig.~1. The Scheepers Diagram for Lindel$\ddot{o}$f spaces.

\end{center}
\bigskip

The papers \cite{jmss,ko,sch3,sch1,bts} have initiated the
simultaneous
 consideration of these properties in the case where $\mathcal{A}$ and
 $\mathcal{B}$ are important families of open covers of a
 topological space $X$.

In papers
\cite{arh,arh2,buk1,buk2,busu,ko,kosc,os1,ospy,os2,sak,sash,sak2,sch3,sch4,scheep,bts}
 (and many others) were investigated the applications of selection principles in the
study of the properties of function spaces. In particular, the
properties of the space $C_p(X)$ were investigated. In this paper
we continue to study different selectors for sequences of dense
sets of $C_p(X)$.

\section{Main definitions and notation}

\bigskip

 We recall that a subset of $X$ that is the
 complete preimage of zero for a certain function from~$C(X)$ is called a zero-set.
A subset $O\subseteq X$  is called  a cozero-set (or functionally
open) of $X$ if $X\setminus O$ is a zero-set.

\medskip
Recall that the $i$-weight $iw(X)$ of a space $X$ is the smallest
infinite cardinal number $\tau$ such that $X$ can be mapped by a
one-to-one continuous mapping onto a Tychonoff space of the weight
not greater than $\tau$.

\medskip

\medskip

\begin{theorem} (Noble \cite{nob}) \label{th31}  A space $C_{p}(X)$ is separable iff
$iw(X)=\aleph_0$.
\end{theorem}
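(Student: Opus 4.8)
The plan is to prove both implications by the standard duality between continuous maps on $X$ and their images, combined with the classical Nachbin–Shirota style argument relating the weight of a continuous image to the density of $C_p$ of that image.

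First I would prove the "if" direction. Suppose $iw(X) = \aleph_0$, so there is a one-to-one continuous map $g \colon X \to Y$ onto a Tychonoff space $Y$ with $w(Y) \le \aleph_0$, i.e. $Y$ is second countable and hence separable metrizable. The induced dual map $g^* \colon C_p(Y) \to C_p(X)$, defined by $g^*(h) = h \circ g$, is continuous, injective, and — because $g$ is a continuous bijection — its image is exactly $\{h \circ g : h \in C(Y)\}$. It remains to check that this image is dense in $C_p(X)$: given a basic open set $[x_1,\dots,x_k,U_1,\dots,U_k]$ in $C_p(X)$ with the $x_i$ distinct, the points $g(x_1),\dots,g(x_k)$ are distinct in the Tychonoff space $Y$, so there is $h \in C(Y)$ with $h(g(x_i)) \in U_i$ for each $i$; then $h \circ g$ lies in the given basic set. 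Since $C_p(Y)$ is separable (a second countable space has separable $C_p$ — one can take, e.g., the $\mathbb{Q}$-linear combinations of finite products of a countable family of functions separating points and closed sets, or simply invoke that $w(Y) = \aleph_0$ implies $C_p(Y)$ has a countable network), its continuous image $g^*(C_p(Y))$ is separable, and a dense separable subspace forces $C_p(X)$ itself to be separable.

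Next I would prove the "only if" direction. Suppose $C_p(X)$ is separable and let $D = \{f_n : n \in \mathbb{N}\}$ be a countable dense subset. Define the diagonal map $\Delta \colon X \to \mathbb{R}^{\mathbb{N}}$ by $\Delta(x) = (f_n(x))_{n \in \mathbb{N}}$; it is continuous, and its image lies in the second countable space $\mathbb{R}^{\mathbb{N}}$. The key claim is that $\Delta$ is injective, equivalently that the family $\{f_n\}$ separates the points of $X$. Indeed, if $x \ne y$ in $X$, pick $f \in C(X)$ with $f(x) = 0$ and $f(y) = 1$; the basic open set $[x,y,(-1/3,1/3),(2/3,4/3)]$ contains $f$, hence by density contains some $f_n$, and that $f_n$ separates $x$ and $y$. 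Thus $\Delta$ is a one-to-one continuous map of $X$ onto a subspace of $\mathbb{R}^{\mathbb{N}}$, which is Tychonoff of weight $\le \aleph_0$; therefore $iw(X) \le \aleph_0$, and since $iw$ is by definition an infinite cardinal, $iw(X) = \aleph_0$.

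The main obstacle — really the only subtle point — is verifying that the candidate maps are injective and that the relevant image is dense: in the "if" direction one needs that distinct points of $X$ map to distinct points of $Y$ so that a single $h \in C(Y)$ can realize prescribed values, and in the "only if" direction one needs that a countable dense $D \subseteq C_p(X)$ automatically separates points. Both reduce to a one-line application of the Tychonoff separation property together with the description of basic open sets of $C_p$. Everything else — continuity of $g^*$ and of $\Delta$, preservation of separability under continuous images and under passing to dense subspaces, and the fact that second countable spaces have separable $C_p$ — is routine.
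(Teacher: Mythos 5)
Your proof is correct. The paper does not prove this statement at all --- it is quoted as a classical result of Noble with a citation --- so there is no internal argument to compare against; your two-directional argument (the dual map $g^*\colon C_p(Y)\to C_p(X)$ with dense separable image when $iw(X)=\aleph_0$, and the diagonal map $\Delta(x)=(f_n(x))_{n\in\mathbb{N}}$ into $\mathbb{R}^{\mathbb{N}}$ given a countable dense $D\subseteq C_p(X)$, whose injectivity follows from density plus complete regularity) is exactly the standard proof of Noble's theorem, and all the steps you flag as routine (continuity of $g^*$ and $\Delta$, separability of $C_p(Y)$ for second countable $Y$, passage through dense subspaces) are indeed routine and correctly handled.
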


\medskip

Let $X$ be a topological space, and $x\in X$. A subset $A$ of $X$
{\it converges} to $x$, $x=\lim A$, if $A$ is infinite, $x\notin
A$, and for each neighborhood $U$ of $x$, $A\setminus U$ is
finite. Consider the following collection:

$\bullet$ $\Omega_x=\{A\subseteq X : x\in \overline{A}\setminus
A\}$;

$\bullet$ $\Gamma_x=\{A\subseteq X : x=\lim A\}$.

Note that if $A\in \Gamma_x$, then there exists $\{a_n\}\subset A$
converging to $x$. So, simply $\Gamma_x$ may be the set of
non-trivial convergent sequences to $x$.

\bigskip

We write $\Pi (\mathcal{A}_x, \mathcal{B}_x)$ without specifying
$x$, we mean $(\forall x) \Pi (\mathcal{A}_x, \mathcal{B}_x)$.

 So we have three types of topological properties
described through the selection principles:

$\bullet$  local properties of the form $S_*(\Phi_x,\Psi_x)$;

$\bullet$  global properties of the form $S_*(\Phi,\Psi)$;

$\bullet$  semi-local of the form $S_*(\Phi,\Psi_x)$.

\medskip

In paper \cite{os2}, we investigated different selectors for
sequences of dense sets of $C_p(X)$. We gave the characteristics
of selection principles $S_{1}(\mathcal{P},\mathcal{Q})$,
$S_{fin}(\mathcal{P},\mathcal{Q})$  for
$\mathcal{P},\mathcal{Q}\in \{\mathcal{D}$, $\mathcal{S}\}$
through the selection principles of a space $X$.

So for some selectors for sequences of dense sets of $C_p(X)$ (
Fig.~2) we obtained the corresponding characteristics through the
selection principles of a space $X$ (see Fig.~3 for a metrizable
separable space $X$).

\begin{center}
\ingrw{90}{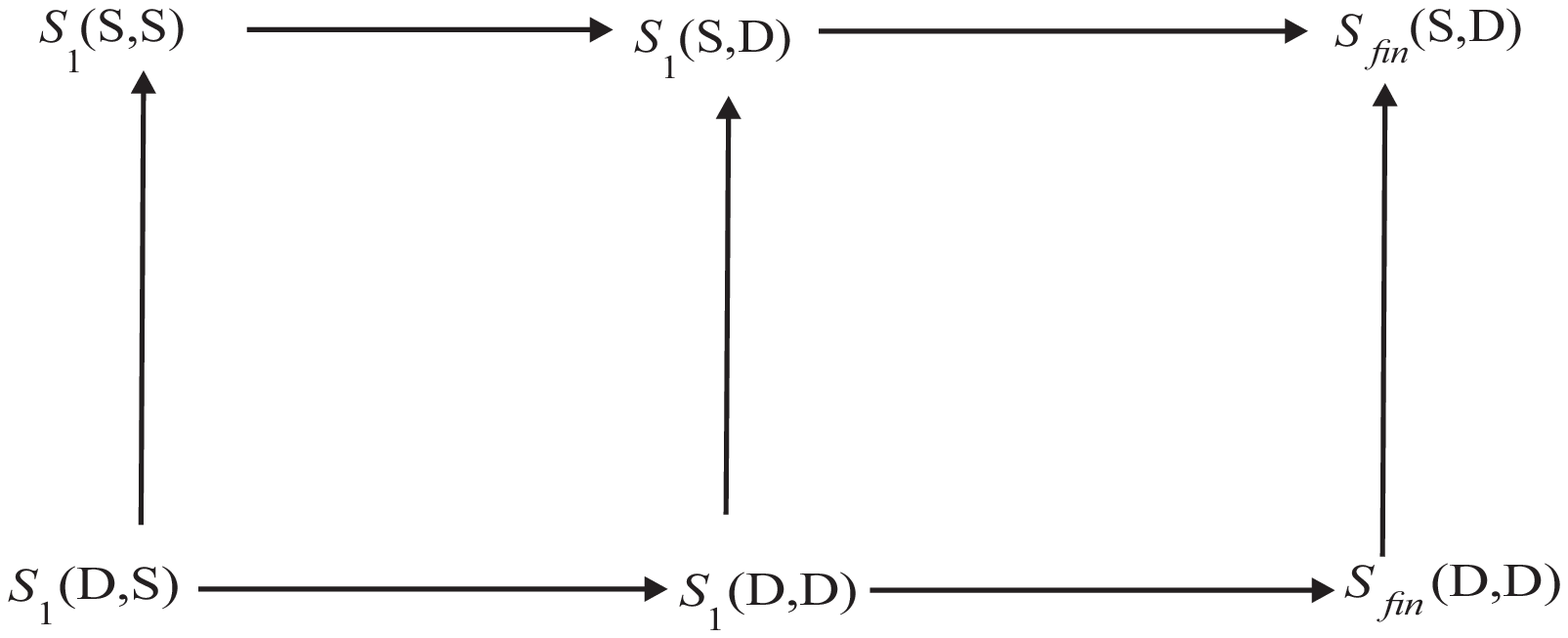}

\medskip

Fig.~2. The Diagram of selectors for sequences of dense sets of
$C_p(X)$.

\end{center}

\bigskip

\begin{center}
\ingrw{90}{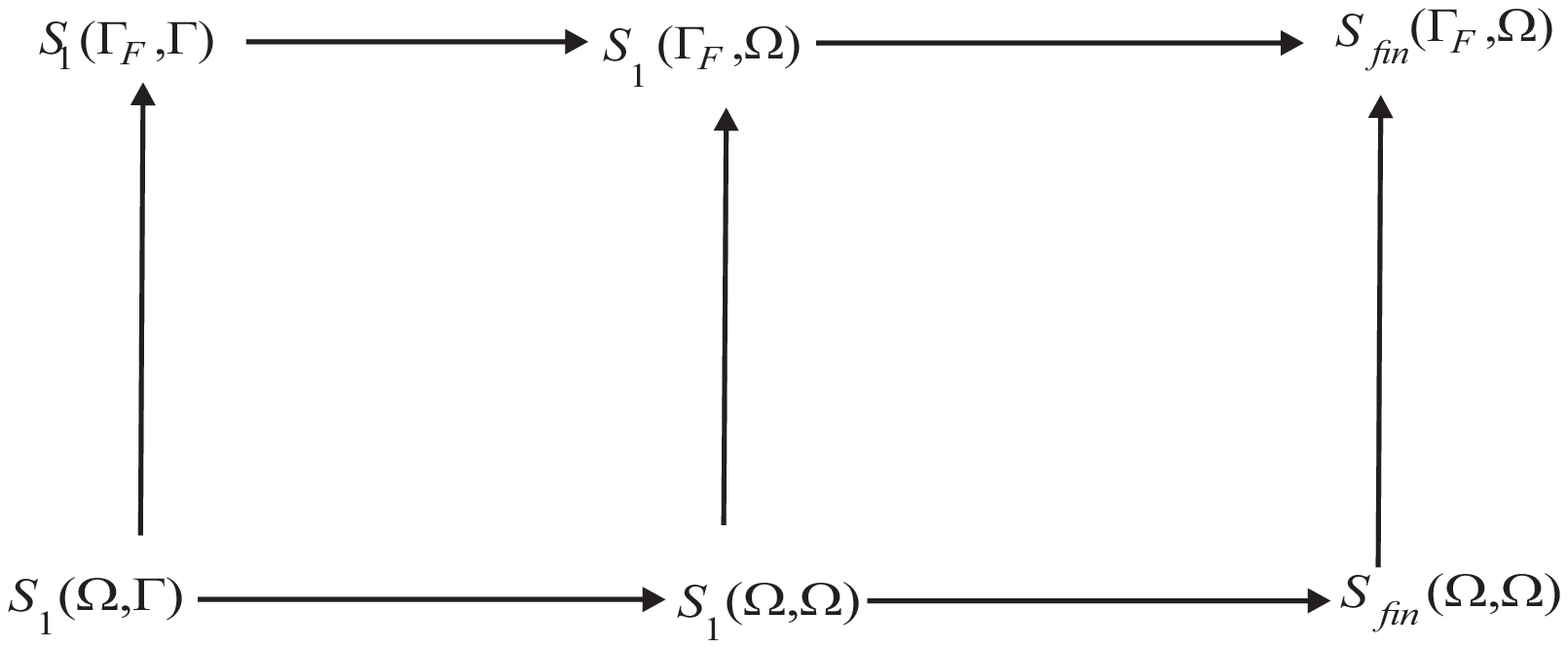}

\medskip

Fig.~3. The Diagram of selection principles for metrizable
separable space $X$ corresponding to selectors for sequences of
dense sets of $C_p(X)$.

\end{center}

\medskip

Our main goal is to describe the remaining topological properties
of $X$ of the Scheepers Diagram in terms of local, global and
semi-local properties of $C_p(X)$.

\section{The Rothberger and Menger properties}

\medskip
A space $X$ is said to be Rothberger \cite{rot} (or, \cite{mil})
if for every sequence $(\mathcal{U}_n : n\in \omega)$ of open
covers of $X$, there is a sequence $(V_n : n\in \mathbb{N})$ such
that for each $n$, $V_n\in \mathcal{U}_n$, and $\{V_n :
n\in\mathbb{N} \}$ is an open cover of $X$.

\begin{definition} A set $A\subseteq C_p(X)$ will
be called {\it $n$-dense} in $C_p(X)$, if for each $n$-finite set
$\{x_1,...,x_n\}\subset X$ such that $x_i\neq x_j$ for $i\neq j$
and an open sets $W_1,..., W_n$ in $\mathbb{R}$ there is $f\in A$
such that $f(x_i)\in W_i$ for $i\in \overline{1,n}$.

\end{definition}

Obviously, that if $A$ is a $n$-dense set of $C_p(X)$ for each
$n\in \mathbb{N}$ then $A$ is a dense set of $C_p(X)$.

For a space $C_p(X)$ we denote:

$\mathcal{A}_n$
--- the family of a $n$-dense subsets of $C_p(X)$.

If $n=1$, then we denote $\mathcal{A}$ instead of $\mathcal{A}_1$.

\begin{definition} Let $f\in C(X)$. A set $B\subseteq C_p(X)$ will
be called {\it $n$-dense} at point $f$, if for each $n$-finite set
$\{x_1,...,x_n\}\subset X$ and $\epsilon>0$ there is $h\in B$ such
that $h(x_i)\in (f(x_i)-\epsilon, f(x_i)+\epsilon)$ for $i\in
\overline{1,n}$.
\end{definition}

Obviously, that if $B$ is a $n$-dense at point $f$ for each $n\in
\mathbb{N}$ then $f\in \overline{B}$.

For a space $C_p(X)$ we denote:

$\mathcal{A}_{n,f}$
--- the family of a $n$-dense at point $f$ subsets of $C_p(X)$.

If $n=1$, then we denote $\mathcal{A}_f$ instead of
$\mathcal{A}_{1,f}$.

 Let $\mathcal{U}$ be an open cover of $X$ and $n\in \mathbb{N}$.

$\bullet$ $\mathcal{U}$ is an $n$-cover of $X$ if for each
$F\subset X$ with $|F|\leq n$, there is $U\in \mathcal{U}$ such
that $F \subset U$ \cite{bts1}.

Denote by $\mathcal{O}_n$ --- the family of open $n$-covers of
$X$.

$\bullet$  $S_{1}(\mathcal{O}, \mathcal{O})=S_{1}(\Omega,
\mathcal{O})$ \cite{sch3}.

$\bullet$ $S_{1}(\Omega,
\mathcal{O})=S_{1}(\{\mathcal{O}_n\}_{n\in \mathbb{N}},
\mathcal{O})$ \cite{bts1}.

\begin{theorem}\label{th143} For a   space $X$, the following statements are
equivalent:

\begin{enumerate}

\item  $C_p(X)$ satisfies $S_{1}(\mathcal{A},\mathcal{A})$;

\item $X$ satisfies $S_{1}(\mathcal{O}, \mathcal{O})$ [Rothberger
property];

\item $C_p(X)$ satisfies $S_{1}(\mathcal{A}_f,\mathcal{A}_f)$;

\item  $C_p(X)$ satisfies $S_{1}(\mathcal{A},\mathcal{A}_f)$;

\item  $C_p(X)$ satisfies $S_{1}(\mathcal{D},\mathcal{A})$;

\item  $C_p(X)$ satisfies $S_{1}(\{\mathcal{A}_n\}_{n\in
\mathbb{N}},\mathcal{A})$;

\item $C_p(X)$ satisfies $S_{1}(\{\mathcal{A}_{n,f}\}_{n\in
\mathbb{N}},\mathcal{A}_f)$;

\item  $C_p(X)$ satisfies $S_{1}(\{\mathcal{A}_n\}_{n\in
\mathbb{N}},\mathcal{A}_f)$.

\end{enumerate}

\end{theorem}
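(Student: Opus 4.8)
The plan is to prove all eight statements equivalent by a single cycle of implications, in which most arrows are purely formal. One first records the inclusions $\mathcal{D}\subseteq\mathcal{A}_{n}\subseteq\mathcal{A}=\mathcal{A}_{1}\subseteq\mathcal{A}_{f}$, $\mathcal{D}\subseteq\mathcal{A}_{n,f}\subseteq\mathcal{A}_{f}$ and $\mathcal{A}_{n}\subseteq\mathcal{A}_{n,f}$, each of which is checked directly (since $X$ is Tychonoff a dense $A\subseteq C_{p}(X)$ meets every $[x_{1},\dots,x_{k},W_{1},\dots,W_{k}]$ with the $x_{i}$ pairwise distinct, hence is $n$-dense; a $1$-dense $A$ is $1$-dense at every $f$ by applying $1$-density with $W=(f(x_{1})-\varepsilon,f(x_{1})+\varepsilon)$; an $n$-dense $A$ is $n$-dense at every $f$; and so on). Reading a sequence drawn from a smaller family as a sequence drawn from a larger one, and relaxing the target family in the same way, these inclusions yield at once $(1)\Rightarrow(4),(5),(6)$, $(3)\Rightarrow(4),(7)$, $(4)\Rightarrow(8)$, $(6)\Rightarrow(5)$, $(7)\Rightarrow(8)$. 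Since the translation $g\mapsto g-f$ is an autohomeomorphism of $C_{p}(X)$ carrying $\mathcal{A}_{n,f},\mathcal{A}_{f}$ onto $\mathcal{A}_{n,\mathbf{0}},\mathcal{A}_{\mathbf{0}}$ and fixing $\mathcal{D},\mathcal{A}_{n},\mathcal{A}$, statements $(3),(7),(8)$ are unaffected by replacing $f$ with $\mathbf{0}$. It then remains to prove $(2)\Rightarrow(1)$, $(2)\Rightarrow(3)$, $(8)\Rightarrow(2)$ and $(5)\Rightarrow(2)$; a short check shows the digraph on $\{1,\dots,8\}$ so obtained is strongly connected.

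\emph{The pullback implications} $(2)\Rightarrow(1)$ and $(2)\Rightarrow(3)$. Fix an enumeration $\mathbb{Q}=\{q_{m}:m\in\mathbb{N}\}$ and a partition of $\mathbb{N}$ into infinite sets $\{N_{m,j}:(m,j)\in\mathbb{N}\times\mathbb{N}\}$. Given $1$-dense sets $A_{1},A_{2},\dots$, for $k\in N_{m,j}$ the family $\mathcal{U}_{k}=\{\,\{x\in X:|h(x)-q_{m}|<1/j\}:h\in A_{k}\,\}$ is an open cover of $X$ because $A_{k}$ is $1$-dense. Applying $S_{1}(\mathcal{O},\mathcal{O})$ separately to each sequence $(\mathcal{U}_{k})_{k\in N_{m,j}}$ picks $h_{k}\in A_{k}$ ($k\in N_{m,j}$) with $\{\,\{x:|h_{k}(x)-q_{m}|<1/j\}:k\in N_{m,j}\,\}$ a cover of $X$; running this over all $(m,j)$ defines $h_{k}\in A_{k}$ for every $k$, and $\{h_{k}:k\in\mathbb{N}\}$ is $1$-dense, for if $x\in X$ and $W\subseteq\mathbb{R}$ is nonempty open one picks $q_{m}\in W$ and $j$ with $(q_{m}-1/j,q_{m}+1/j)\subseteq W$ and then some $k\in N_{m,j}$ with $h_{k}(x)\in W$. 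For $(2)\Rightarrow(3)$ one argues the same way with $f=\mathbf{0}$: given sets $B_{k}$ that are $1$-dense at $\mathbf{0}$, fix a partition of $\mathbb{N}$ into infinite sets $\{N_{j}:j\in\mathbb{N}\}$, take the open covers $\{\,\{x:|h(x)|<1/j\}:h\in B_{k}\,\}$ for $k\in N_{j}$, apply $S_{1}(\mathcal{O},\mathcal{O})$ inside each $N_{j}$, and obtain a selection that is $1$-dense at $\mathbf{0}$.

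\emph{The pushforward implications} $(8)\Rightarrow(2)$ and $(5)\Rightarrow(2)$. To an open cover $\mathcal{U}$ of $X$ assign the set $A(\mathcal{U})$ of all $h\in C(X)$ that are identically $1$ on $X\setminus U$ for some $U\in\mathcal{U}$. A finite Tychonoff interpolation shows $A(\mathcal{U})$ is $n$-dense when $\mathcal{U}$ is a cozero $n$-cover and dense when $\mathcal{U}$ is a cozero $\omega$-cover: for pairwise distinct $x_{1},\dots,x_{k}$ contained in one $U\in\mathcal{U}$ and $w_{i}\in W_{i}$, choose $\varphi_{i}\in C(X,[0,1])$ with $\varphi_{i}(x_{i})=1$, $\varphi_{i}(x_{j})=0$ for $j\ne i$ and $\varphi_{i}$ identically $0$ off $U$, and put $h=1+\sum_{i}(w_{i}-1)\varphi_{i}$. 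Crucially, if $h\in A(\mathcal{U})$ is associated with $U$ and $h(x)\ne 1$, then $x\in U$. Now using the cited equalities $S_{1}(\mathcal{O},\mathcal{O})=S_{1}(\Omega,\mathcal{O})=S_{1}(\{\mathcal{O}_{n}\}_{n},\mathcal{O})$: for $(8)\Rightarrow(2)$ start from a sequence of $n$-covers, refine each to a cozero $n$-cover $\mathcal{U}_{n}$, apply $(8)$ (in its $f=\mathbf{0}$ form) to $(A(\mathcal{U}_{n}))_{n}$ to get $h_{n}\in A(\mathcal{U}_{n})$ with $\{h_{n}\}$ $1$-dense at $\mathbf{0}$; then for each $x\in X$ some $h_{n}$ satisfies $|h_{n}(x)|<1$, hence $h_{n}(x)\ne 1$ and $x$ lies in the associated $U_{n}$, so the $U_{n}$ cover $X$. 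Implication $(5)\Rightarrow(2)$ is the same with ``$n$-cover'' replaced by ``$\omega$-cover'' and ``$n$-dense'' by ``dense'', extracting the cover from the $1$-dense selection using the open set $W=\mathbb{R}\setminus\{1\}$.

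The step I expect to be the real obstacle is the pullback. Applying $S_{1}(\mathcal{O},\mathcal{O})$ a single time to the covers $\mathcal{U}_{k}$ is not enough, since one selection forces the chosen functions only to be close to one fixed value at each point of $X$; splitting the index set into infinitely many blocks, one per target parameter $(q_{m},1/j)$ (respectively $1/j$), and running the covering selection inside each block separately is precisely what makes the resulting set of functions hit every prescribed real value at every point (respectively approximate $\mathbf{0}$ at every point). The remaining ingredients --- that the $\mathcal{U}_{k}$ are covers, that $A(\mathcal{U})$ has the stated density, and the routine cozero $n$-cover / $\omega$-cover refinement reductions --- are standard Tychonoff bookkeeping and should cause no trouble.
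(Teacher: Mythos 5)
Your proposal is correct, but its architecture differs from the paper's. The paper closes the diagram through the chains $(1)\Rightarrow(2)\Rightarrow(3)\Rightarrow(4)\Rightarrow(1)$, $(1)\Rightarrow(5)\Rightarrow(6)\Rightarrow(8)\Rightarrow(2)$ and $(3)\Rightarrow(7)\Rightarrow(2)$: in particular it proves $(1)\Rightarrow(2)$ directly using $1$-dense sets of functions that are $1$ off a cover member and rational on a finite set, recovers $(4)\Rightarrow(1)$ by applying the $\mathcal{A}_f$-selection at the countably many constant rational functions, and handles $(5)\Rightarrow(6)$ by a re-indexing of the $n$-dense sets. You instead harvest all monotone arrows from the inclusions $\mathcal{D}\subseteq\mathcal{A}_n\subseteq\mathcal{A}\subseteq\mathcal{A}_f$, $\mathcal{A}_n\subseteq\mathcal{A}_{n,f}\subseteq\mathcal{A}_f$ and prove only four substantive implications: your $(2)\Rightarrow(1)$ fuses the paper's $(2)\Rightarrow(3)$ and $(4)\Rightarrow(1)$ into a single block argument indexed by the parameters $(q_m,1/j)$, and your $(5)\Rightarrow(2)$ is new relative to the paper, going through cozero $\omega$-covers, the sets $A(\mathcal{U})$ of functions identically $1$ off a member, the window $W=\mathbb{R}\setminus\{1\}$, and the Scheepers equality $S_1(\Omega,\mathcal{O})=S_1(\mathcal{O},\mathcal{O})$, whereas the paper routes $(5)$ through $(6)$ and $(8)$ and only invokes the Tsaban equality $S_1(\{\mathcal{O}_n\}_{n},\mathcal{O})=S_1(\mathcal{O},\mathcal{O})$; your $(8)\Rightarrow(2)$ and $(2)\Rightarrow(3)$ essentially coincide with the paper's. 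The underlying toolkit (preimage covers, Tychonoff interpolation, translation homogeneity, the cited cover equalities) is the same, so what your organization buys is fewer ad hoc constructions and a cleaner separation between formal monotonicity and the two genuinely functional translations. One small point you should add, which the paper addresses explicitly in its proof of $(2)\Rightarrow(3)$: with the paper's convention that covers are nontrivial ($X\notin\mathcal{U}$), in your pullback steps a family $\{\{x:|h(x)-q_m|<1/j\}:h\in A_k\}$ may contain $X$ and then is not literally an open cover to which $S_1(\mathcal{O},\mathcal{O})$ applies; for such indices simply select a witnessing $h$ (it alone serves the whole block) and choose arbitrarily in the remaining sets of that block, so the verification of $1$-density (respectively $1$-density at $\mathbf{0}$) is unaffected.
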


\begin{proof} $(1)\Rightarrow(2)$.  Let $\{\mathcal{O}_n\}_{n\in \mathbb{N}}$ be a
sequence of open covers of $X$. We set $A_n=\{f\in C(X):
f\upharpoonright (X\setminus U)=1$ and $f\upharpoonright K=q$ for
some $U\in \mathcal{O}_n$ , a finite set $K\subset U$ and $q\in
\mathbb{Q}\}$. It is not difficult to see that each $A_n$ is a
$1$-dense subset of $C_p(X)$  because  $\mathcal{O}_n$ is a cover
of $X$ and $X$ is Tychonoff.

 By the assumption there exists $f_n\in A_n$ such that
$\{f_n : n\in \mathbb{N}\}\in \mathcal{A}$.

 For each $f_n$ we
take $U_n\in \mathcal{O}_n$ such that
$f_n\upharpoonright(X\setminus U_n)=1$.

 Set $\mathcal{U}=\{ U_n : n\in \mathbb{N}\}$. For $x\in X$ we consider the basic open
 neighborhood $[x, W]$
of $\bf{0}$, where $W=(-\frac{1}{2},\frac{1}{2})$.

 Note that there is $m\in \mathbb{N}$ such that
$[x, W]$ contains $f_m\in \{f_n : n\in \mathbb{N}\}$. This means
$x\in
 U_m$. Hence $\mathcal{U}$ is a cover
of $X$.

$(2)\Rightarrow(3)$. Let $B_n\in \mathcal{A}_f$ for each $n\in
\mathbb{N}$. We renumber $\{B_n\}_{n\in \mathbb{N}}$ as
$\{B_{i,j}\}_{i,j\in \mathbb{N}}$.  Since $C(X)$ is homogeneous,
we may think that $f=\bf{0}$.  We set
$\mathcal{U}_{i,j}=\{g^{-1}(-1/i, 1/i) : g\in B_{i,j}\}$ for each
$i,j\in \mathbb{N}$. Since $B_{i,j}\in \mathcal{A}_0$,
$\mathcal{U}_{i,j}$ is an open cover of $X$ for each $i,j\in
\mathbb{N}$. In case the set $M=\{i\in \mathbb{N}: X\in
\mathcal{U}_{i,j} \}$ is infinite, choose $g_{m}\in B_{m,j}$ $m\in
M$ so that $g^{-1}(-1/m, 1/m)=X$, then $\{g_m : m\in
\mathbb{N}\}\in \mathcal{A}_f$.

So we may assume that there exists $i'\in \mathbb{N}$ such that
for each $i\geq i'$ and $g\in B_{i,j}$ $g^{-1}(-1/i, 1/i)$ is not
$X$.

For the sequence $\mathcal{V}_i=(\mathcal{U}_{i,j} : j\in
\mathbb{N})$ of open covers there exist $f_{i,j}\in B_{i,j}$ such
that $\mathcal{U}_i=\{f^{-1}_{i,j}(-1/i,1/i):  j\in \mathbb{N}\}$
is a cover of $X$.  Let $[x, W]$ be any basic open neighborhood of
$\bf{0}$, where $W=(-\epsilon, \epsilon)$, $\epsilon>0$. There
exists $m\geq i'$ and $j\in \mathbb{N}$  such that $1/m<\epsilon$
and $x\in f^{-1}_{m,j}(-1/m, 1/m)$. This means $\{f_{i,j}: i,j\in
\mathbb{N}\}\in \mathcal{A}_f$.

$(3)\Rightarrow(4)$ is immediate.

$(4)\Rightarrow(1)$. Let $A_n\in \mathcal{A}$ for each $n\in
\mathbb{N}$. We renumber $\{A_n\}_{n\in \mathbb{N}}$ as
$\{A_{i,j}\}_{i,j\in \mathbb{N}}$. Renumber the rational numbers
$\mathbb{Q}$ as $\{q_i : i\in \mathbb{N}\}$.  Fix
$i\in\mathbb{N}$. By the assumption there exists $f_{i,j}\in
A_{i,j}$ such that $\{f_{i,j} : j\in \mathbb{N}\}\in
\mathcal{A}_{q_i}$ where $q_i$  is the constant function to $q_i$.
Then $\{f_{i,j} : i,j\in \mathbb{N}\}\in \mathcal{A}$.

$(1)\Rightarrow(5)$. Since a dense set of $C_p(X)$ is a $1$-dense
set of $C_p(X)$, we have $C_p(X)$ satisfies
$S_{1}(\mathcal{D},\mathcal{A})$.

$(5)\Rightarrow(6)$. Let $D_n \in \mathcal{A}_n$ for each $n\in
\mathbb{N}$. We renumber $\{D_n\}_{n\in \mathbb{N}}$ as
$\{D_{i,j}\}_{i,j\in \mathbb{N}}$. Then $P_j=\{D_{i,j} : i\in
\mathbb{N}\}$ is a dense subset of $C_p(X)$ for each $j\in
\mathbb{N}$. By (5), there is $p_j\in P_j$ for each $j\in
\mathbb{N}$ such that $\{p_j: j\in \mathbb{N}\}\in \mathcal{A}$.
Hence, we have $C_p(X)$ satisfies $S_{1}(\{\mathcal{A}_n\}_{n\in
\mathbb{N}},\mathcal{A})$.

$(6)\Rightarrow(8)$ is immediate.

$(8)\Rightarrow(2)$. Claim that $X$ satisfies
$S_{1}(\{\mathcal{O}_n\}_{n\in \mathbb{N}}, \mathcal{O})$. Fix
$\{\mathcal{O}_n\}_{n\in \mathbb{N}}$. For every $n\in \mathbb{N}$
a set $\mathcal{S}_n=\{f\in C(X) : f\upharpoonright (X\setminus
U)=1$ and $f(x_i)\in \mathbb{Q}$ for each $i=\overline{1,n}$ for
$U\in \mathcal{O}_n$ and a finite set $K=\{x_1,..., x_n\}\subset U
\}$. Note that $\mathcal{S}_n\in \mathcal{A}_n$ for each $n\in
\mathbb{N}$. By $(8)$, there is $f_n\in \mathcal{S}_n$ for each
$n\in \mathbb{N}$ such that $\{f_n: n\in \mathbb{N}\}\in
\mathcal{A}_0$. Then $\{U_n : n\in \mathbb{N} \}\in \mathcal{O}$.

$(3)\Rightarrow(7)$ is immediate.

$(7)\Rightarrow(2)$. The proof is analogous to proof of
implication $(8)\Rightarrow(2)$.

\end{proof}

\medskip
A space $X$ is said to be Menger \cite{hur} (or, \cite{sash}) if
for every sequence $(\mathcal{U}_n : n\in \mathbb{N})$ of open
covers of $X$, there are finite subfamilies $\mathcal{V}_n\subset
\mathcal{U}_n$ such that $\bigcup \{\mathcal{V}_n : n\in
\mathbb{N} \}$ is a cover of $X$.

 Every $\sigma$-compact space is Menger,
and a Menger space is Lindel$\ddot{o}$f.

\begin{theorem}\label{th144} For a   space $X$, the following statements are
equivalent:

\begin{enumerate}

\item  $C_p(X)$ satisfies $S_{fin}(\mathcal{A},\mathcal{A})$;

\item $X$ satisfies $S_{fin}(\mathcal{O}, \mathcal{O})$ [Menger
property];

\item $C_p(X)$ satisfies $S_{fin}(\mathcal{A}_f,\mathcal{A}_f)$;

\item  $C_p(X)$ satisfies $S_{fin}(\mathcal{A},\mathcal{A}_f)$;

\item  $C_p(X)$ satisfies $S_{fin}(\mathcal{D},\mathcal{A})$;

\item  $C_p(X)$ satisfies $S_{fin}(\{\mathcal{A}_n\}_{n\in
\mathbb{N}},\mathcal{A})$;

\item $C_p(X)$ satisfies $S_{fin}(\{\mathcal{A}_{n,f}\}_{n\in
\mathbb{N}},\mathcal{A}_f)$;

\item  $C_p(X)$ satisfies $S_{fin}(\{\mathcal{A}_n\}_{n\in
\mathbb{N}},\mathcal{A}_f)$.

\end{enumerate}

\end{theorem}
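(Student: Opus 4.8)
The plan is to mirror, almost verbatim, the argument scheme already used for Theorem~\ref{th143}, replacing single selections by finite selections throughout. The cycle of implications I would establish is $(1)\Rightarrow(2)\Rightarrow(3)\Rightarrow(4)\Rightarrow(1)$, together with $(1)\Rightarrow(5)\Rightarrow(6)\Rightarrow(8)\Rightarrow(2)$ and $(3)\Rightarrow(7)\Rightarrow(2)$, exactly as in the Rothberger case. The only conceptual input beyond the previous theorem is the observation that the finite-selection principle $S_{fin}(\mathcal{O},\mathcal{O})$ for open covers is unchanged if one replaces $\mathcal{O}$ by the $n$-covers $\mathcal{O}_n$ in the first coordinate, analogous to the cited fact $S_{1}(\Omega,\mathcal{O})=S_{1}(\{\mathcal{O}_n\}_{n\in\mathbb{N}},\mathcal{O})$; but in fact we never need that equivalence in full strength, since in each implication touching covers we either manufacture covers from $1$-dense or $n$-dense families by hand or read off a cover from selected functions, just as before.

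First I would prove $(1)\Rightarrow(2)$: given a sequence $(\mathcal{O}_n)$ of open covers of $X$, form the same $1$-dense sets $A_n=\{f\in C(X): f\upharpoonright(X\setminus U)=1,\ f\upharpoonright K=q,\ U\in\mathcal{O}_n,\ K\subset U\text{ finite},\ q\in\mathbb{Q}\}$, apply $S_{fin}(\mathcal{A},\mathcal{A})$ to get finite $B_n\subseteq A_n$ with $\bigcup_n B_n\in\mathcal{A}$, and for each $f\in B_n$ pick $U_f\in\mathcal{O}_n$ with $f\upharpoonright(X\setminus U_f)=1$; the finite families $\mathcal{V}_n=\{U_f: f\in B_n\}$ then cover $X$ because, evaluating at any $x\in X$ against the neighborhood $[x,(-1/2,1/2)]$ of $\mathbf{0}$, some selected $f$ lies in it, forcing $x\in U_f$. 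For $(2)\Rightarrow(3)$ I would repeat the reindexing trick: given $B_n\in\mathcal{A}_f$, assume $f=\mathbf 0$ by homogeneity, reindex as $B_{i,j}$, set $\mathcal{U}_{i,j}=\{g^{-1}(-1/i,1/i): g\in B_{i,j}\}$, dispose of the trivial case where $X$ appears cofinally, then apply the Menger property to the covers $(\mathcal{U}_{i,j}: j\in\mathbb{N})$ for each fixed $i$ to extract finite $F_{i,j}\subseteq B_{i,j}$ whose associated sets cover $X$, and check that $\bigcup_{i,j}F_{i,j}$ is $n$-dense—i.e.\ here just dense—at $\mathbf 0$ by the same $1/m<\epsilon$ estimate. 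The step $(3)\Rightarrow(4)$ is trivial, and $(4)\Rightarrow(1)$ follows the earlier pattern: reindex $A_n$ as $A_{i,j}$, enumerate $\mathbb{Q}=\{q_i\}$, for each $i$ use $S_{fin}(\mathcal{A},\mathcal{A}_{q_i})$ (where $q_i$ is the constant function) to get finite $F_{i,j}\subseteq A_{i,j}$ with $\bigcup_j F_{i,j}$ dense at $q_i$, and conclude $\bigcup_{i,j}F_{i,j}\in\mathcal{A}$ since its closure contains every constant $q_i$, hence is dense.

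For the second branch, $(1)\Rightarrow(5)$ is immediate because every dense set is $1$-dense; $(5)\Rightarrow(6)$ reindexes a sequence of $n$-dense sets $D_n$ as $D_{i,j}$, notes $P_j=\{D_{i,j}: i\in\mathbb{N}\}$ is dense, and applies $S_{fin}(\mathcal{D},\mathcal{A})$ columnwise; $(6)\Rightarrow(8)$ is trivial; and $(8)\Rightarrow(2)$ builds the $n$-dense sets $\mathcal{S}_n=\{f\in C(X): f\upharpoonright(X\setminus U)=1,\ f(x_i)\in\mathbb{Q},\ i=\overline{1,n},\ U\in\mathcal{O}_n,\ \{x_1,\dots,x_n\}\subset U\}$, applies $S_{fin}(\{\mathcal{A}_n\},\mathcal{A}_{\mathbf 0})$ to extract finite subfamilies, and reads off a countable finite-to-one cover refinement of the $\mathcal{O}_n$ exactly as in $(8)\Rightarrow(2)$ of Theorem~\ref{th143}. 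Finally $(3)\Rightarrow(7)$ is trivial and $(7)\Rightarrow(2)$ is the same as $(8)\Rightarrow(2)$ with the point-fixed families. I expect no genuine obstacle: the finite selection version is a routine strengthening of the single-selection proof, and the only place demanding minor care is the bookkeeping in the $(2)\Rightarrow(3)$ step, where one must verify that a countable union of finite selections remains dense at the fixed function and that the trivial case $X\in\mathcal{U}_{i,j}$ is correctly absorbed; I would handle that exactly as in the Rothberger argument, noting that finitely many choices per index only make the union larger, so the density check goes through unchanged.
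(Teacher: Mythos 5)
Your proposal is correct and takes exactly the approach the paper intends: the paper's own proof of this theorem is literally ``analogous to the proof of Theorem~\ref{th143}'', and your finite-selection adaptations of each implication (the covers $\mathcal{V}_n=\{U_f: f\in B_n\}$ in $(1)\Rightarrow(2)$, the columnwise applications in $(2)\Rightarrow(3)$ and $(5)\Rightarrow(6)$, etc.) are the right ones. Two small points to tidy: in $(4)\Rightarrow(1)$ the union belongs to $\mathcal{A}$ because it is $1$-dense at every constant rational function (pick $q_i\in W$ and $\epsilon$ with $(q_i-\epsilon,q_i+\epsilon)\subseteq W$), not because its closure contains the constants; and in $(7),(8)\Rightarrow(2)$ you do in effect need the $S_{fin}$ analogue of $S_{1}(\{\mathcal{O}_n\}_{n\in\mathbb{N}},\mathcal{O})=S_{1}(\mathcal{O},\mathcal{O})$, since the sets $\mathcal{S}_n$ are $n$-dense only when the input covers are $n$-covers --- but that reduction is immediate (replace a cover by the family of unions of at most $n$ of its members, then split the finitely many selected unions back into members), so it causes no real difficulty, exactly as you anticipate.
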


\begin{proof}

The proof is analogous to proof of Theorem \ref{th143}.

\end{proof}

\section{$S_{1}(\mathcal{S},\mathcal{A})$}

\begin{definition}(Sakai)
A $\gamma$-cover $\mathcal{U}$ of co-zero sets of $X$  is {\bf
$\gamma_F$-shrinkable} if there exists a $\gamma$-cover $\{F(U) :
U\in \mathcal{U}\}$ of zero-sets of $X$ with $F(U)\subset U$ for
every $U\in \mathcal{U}$.
\end{definition}

For a topological space $X$ we denote:

$\bullet$ $\Gamma_F$ --- the family of $\gamma_F$-shrinkable
$\gamma$-covers of $X$.

\medskip

\begin{lemma}(Lemma 6.5 in \cite{os2})\label{lemma} Let $\mathcal{U}=\{U_n:n\in \mathbb{N}\}$ be a
$\gamma_F$-shrinkable co-zero cover of a space $X$. Then the set
$S=\{f\in C(X): f\upharpoonright (X\setminus U_n)\equiv 1$ for
some $n\in \mathbb{N}\}$ is sequentially dense in $C_p(X)$.
\end{lemma}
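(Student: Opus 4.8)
The plan is to construct, for a given function $f\in C(X)$ (which by homogeneity of $C_p(X)$ we may take to be $\mathbf{0}$), an explicit sequence in $S$ converging to $f$, for any basic neighborhood. The key input is the $\gamma_F$-shrinkability: fix a $\gamma$-cover $\{F_n : n\in\mathbb{N}\}$ of zero-sets with $F_n\subset U_n$. For each $n$, since $X\setminus U_n$ is a zero-set and $F_n$ is a disjoint zero-set (being contained in $U_n$), Tychonoff's theorem (or rather the functional normality separating two disjoint zero-sets) yields a function $f_n\in C(X)$ with $f_n\upharpoonright (X\setminus U_n)\equiv 1$ and $f_n\upharpoonright F_n\equiv 0$. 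By definition $f_n\in S$. I claim $f_n\to\mathbf{0}$ in $C_p(X)$.

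To verify pointwise convergence: fix $x\in X$ and $\epsilon>0$. Since $\{F_n : n\in\mathbb{N}\}$ is a $\gamma$-cover, there is $N$ such that $x\in F_n$ for all $n\ge N$, hence $f_n(x)=0$ for all $n\ge N$, so in particular $|f_n(x)-0|<\epsilon$ eventually. Therefore $f_n(x)\to 0$ for every $x$, which is exactly convergence in the topology of pointwise convergence. Thus $\mathbf{0}\in [S]_{seq}$, and by homogeneity every $f\in C(X)$ lies in $[S]_{seq}$, i.e. $S$ is sequentially dense in $C_p(X)$.

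The only subtlety — and the step I would be most careful about — is producing the separating functions $f_n$ and making sure $S$ as defined actually contains them: the definition of $S$ only requires $f\upharpoonright(X\setminus U_n)\equiv 1$ for \emph{some} $n$, with no constraint on the behavior on $U_n$, so the construction above is legitimate provided we can separate the zero-set $X\setminus U_n$ from the zero-set $F_n$ by a continuous function valued in $[0,1]$; this is standard since disjoint zero-sets in a Tychonoff space are completely separated. One should also double-check the edge case where $F_n=\varnothing$ for some $n$ or where $U_n=X$, but these cause no difficulty: if $U_n=X$ the constant function $1$ restricted to the empty complement works trivially, and the $\gamma$-cover property of $\{F_n\}$ guarantees each point is eventually covered regardless. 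Since this is essentially a citation of Lemma 6.5 of \cite{os2}, the proof can be kept brief, with the construction of $\{f_n\}$ and the $\gamma$-cover argument as the two displayed steps.
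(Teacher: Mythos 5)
Your construction of the witnesses $f_n$ (separate the disjoint zero-sets $F_n$ and $X\setminus U_n$, then use the $\gamma$-cover property of $\{F_n\}$ to get pointwise convergence) is exactly the right idea, and note that this paper does not reprove the lemma but cites Lemma 6.5 of \cite{os2}, where the argument is of this form. However, your reduction ``by homogeneity of $C_p(X)$ we may take $f=\mathbf{0}$'' is a genuine flaw: homogeneity lets you move any point to any other point by a translation $T_h\colon f\mapsto f+h$, but it only yields $g\in[T_g(S)]_{seq}$ from $\mathbf{0}\in[S]_{seq}$, and $T_g(S)\neq S$ because the set $S$ (functions identically $1$ off some $U_n$) is not translation-invariant. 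So as written you have only shown that $\mathbf{0}$ lies in the sequential closure of $S$, not that an arbitrary $g\in C(X)$ does; the homogeneity step is the kind of reduction that is legitimate for translation-invariant properties such as $S_1(\mathcal{A}_f,\mathcal{A}_f)$ in Theorem 3.3, but not for sequential density of this particular set.

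The repair is immediate and uses nothing beyond what you already have: given an arbitrary $g\in C(X)$, take $h_n\in C(X,[0,1])$ with $h_n\upharpoonright F_n\equiv 0$ and $h_n\upharpoonright(X\setminus U_n)\equiv 1$ (disjoint zero-sets are completely separated), and set $f_n:=g+h_n\cdot(1-g)$. Then $f_n\upharpoonright(X\setminus U_n)\equiv 1$, so $f_n\in S$, while $f_n\upharpoonright F_n\equiv g\upharpoonright F_n$; since $\{F_n : n\in\mathbb{N}\}$ is a $\gamma$-cover, each $x\in X$ lies in all but finitely many $F_n$, hence $f_n(x)=g(x)$ eventually and $f_n\to g$ in $C_p(X)$. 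With this modification (your argument being the special case $g=\mathbf{0}$), the proof is complete and coincides with the intended proof of the cited lemma.
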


\begin{theorem}\label{th173} For a  space $X$, the following statements are equivalent:

\begin{enumerate}

\item  $C_p(X)$ satisfies $S_{1}(\mathcal{S},\mathcal{A})$;

\item $X$ satisfies $S_{1}(\Gamma_F, \mathcal{O})$;

\item $C_p(X)$ satisfies $S_{1}(\Gamma_{\bf0},\mathcal{A}_f)$;

\item  $C_p(X)$ satisfies $S_{1}(\mathcal{S},\mathcal{A}_f)$.

\end{enumerate}

\end{theorem}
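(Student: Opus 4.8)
The plan is to establish the cycle of implications $(1)\Rightarrow(2)\Rightarrow(3)\Rightarrow(4)\Rightarrow(1)$, following the template of the proof of Theorem~\ref{th143} but replacing the role of $1$-dense sets by sequentially dense sets and the role of open covers by $\gamma_F$-shrinkable $\gamma$-covers, since Lemma~\ref{lemma} is exactly the bridge that connects the two.

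\textbf{$(1)\Rightarrow(2)$.} Given a sequence $(\mathcal{U}_n:n\in\mathbb{N})$ of $\gamma_F$-shrinkable $\gamma$-covers of $X$, write $\mathcal{U}_n=\{U^n_k:k\in\mathbb{N}\}$ and apply Lemma~\ref{lemma} to each $\mathcal{U}_n$ to get a sequentially dense set $S_n=\{f\in C(X): f\upharpoonright(X\setminus U^n_k)\equiv 1\text{ for some }k\}$. By $(1)$ pick $f_n\in S_n$ with $\{f_n:n\in\mathbb{N}\}\in\mathcal{A}$. For each $n$ choose $U_n\in\mathcal{U}_n$ with $f_n\upharpoonright(X\setminus U_n)\equiv 1$. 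Then, exactly as in $(1)\Rightarrow(2)$ of Theorem~\ref{th143}, for $x\in X$ the neighborhood $[x,(-\tfrac12,\tfrac12)]$ of $\mathbf{0}$ must contain some $f_m$ (since $\{f_n\}$ is $1$-dense), forcing $x\in U_m$; hence $\{U_n:n\in\mathbb{N}\}\in\mathcal{O}$, so $X$ satisfies $S_1(\Gamma_F,\mathcal{O})$.

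\textbf{$(2)\Rightarrow(3)$.} This mirrors $(2)\Rightarrow(3)$ of Theorem~\ref{th143}, with $\Gamma_{\mathbf 0}$ (non-trivial sequences in $C_p(X)$ converging to $\mathbf 0$) in place of $\mathcal{A}_f$. Given $B_n\in\Gamma_{\mathbf 0}$, re-index as $B_{i,j}$, and for each $i,j$ form the co-zero cover $\mathcal{U}_{i,j}=\{g^{-1}(-1/i,1/i):g\in B_{i,j}\}$. The key point to verify here is that this is a $\gamma_F$-shrinkable $\gamma$-cover: since $B_{i,j}\to\mathbf 0$, each $x$ lies in $g^{-1}(-1/i,1/i)$ for all but finitely many $g\in B_{i,j}$, giving the $\gamma$-cover property, and the zero-sets $g^{-1}[-1/(2i),1/(2i)]$ provide the shrinking. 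After disposing of the trivial case where $X$ itself appears as a member infinitely often, apply $S_1(\Gamma_F,\mathcal{O})$ to the sequence $(\mathcal{U}_{i,j}:j\in\mathbb{N})$ for each fixed $i$ to extract $f_{i,j}\in B_{i,j}$ with $\{f^{-1}_{i,j}(-1/i,1/i):j\in\mathbb{N}\}$ an open cover; then a standard $\epsilon$-vs-$1/m$ argument shows $\{f_{i,j}:i,j\in\mathbb{N}\}$ is $1$-dense at $\mathbf 0$, i.e.\ in $\mathcal{A}_f$.

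\textbf{$(3)\Rightarrow(4)$ and $(4)\Rightarrow(1)$.} The implication $(3)\Rightarrow(4)$ is immediate once one observes that every set in $\Gamma_{\mathbf 0}$ (a convergent sequence) is sequentially dense at $\mathbf 0$ in the relevant sense — more precisely, a sequence converging to $\mathbf 0$ is an element of $\mathcal{S}$-type data at $\mathbf 0$ after translating by homogeneity — so $S_1(\Gamma_{\mathbf 0},\mathcal{A}_f)$ is a formally weaker hypothesis than $S_1(\mathcal{S},\mathcal{A}_f)$; one must double-check the direction of this containment, which is the one subtle bookkeeping point. For $(4)\Rightarrow(1)$ I would imitate $(4)\Rightarrow(1)$ of Theorem~\ref{th143}: given $S_n\in\mathcal{S}$, re-index as $S_{i,j}$, enumerate $\mathbb{Q}=\{q_i\}$, and for each $i$ use $(4)$ (after translating so the target constant function $q_i$ becomes $\mathbf 0$, using homogeneity of $C_p(X)$) to pick $f_{i,j}\in S_{i,j}$ with $\{f_{i,j}:j\in\mathbb{N}\}$ $1$-dense at $q_i$; then $\{f_{i,j}:i,j\in\mathbb{N}\}$ is $1$-dense in $C_p(X)$.

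\textbf{Main obstacle.} The routine steps are the covering/selection manipulations, which transfer verbatim from Theorem~\ref{th143}. The genuine content — and the step I expect to require the most care — is $(2)\Rightarrow(3)$, specifically verifying that $\mathcal{U}_{i,j}=\{g^{-1}(-1/i,1/i):g\in B_{i,j}\}$ really is a $\gamma_F$-shrinkable co-zero $\gamma$-cover (so that $S_1(\Gamma_F,\mathcal{O})$ applies), together with the symmetric concern in the reverse direction that Lemma~\ref{lemma} produces a \emph{sequentially} dense set from a $\gamma_F$-shrinkable cover. These two facts are what pin down $\Gamma_F$ as the correct cover-side family; everything else is the standard $C_p$-duality bookkeeping.
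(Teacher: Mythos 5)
Your steps $(1)\Rightarrow(2)$, $(2)\Rightarrow(3)$ and $(4)\Rightarrow(1)$ are in substance the paper's own argument: $(1)\Rightarrow(2)$ is verbatim (Lemma~\ref{lemma} plus the $[x,(-\frac12,\frac12)]$ trick), and in $(2)\Rightarrow(3)$ your verification that $\mathcal{U}_{i,j}=\{g^{-1}(-1/i,1/i):g\in B_{i,j}\}$ is a $\gamma_F$-shrinkable $\gamma$-cover, with the zero-sets $g^{-1}[-1/(2i),1/(2i)]$ as the shrinking, is exactly the point the paper handles with its sets $U_{i,j,f}=\{x:|f(x)|<\frac{1}{i+j}\}$ and $Z_{i,j,f}=\{x:|f(x)|\le\frac{1}{i+j+1}\}$; the only difference is cosmetic (you fix $i$ and select over $j$ with radius $1/i$, the paper fixes $j$ and selects over $i$ with radius $1/(i+j)$ — both diagonal arguments give membership in $\mathcal{A}_{\mathbf 0}$).

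The genuine flaw is your justification of $(3)\Rightarrow(4)$. You argue that every member of $\Gamma_{\mathbf 0}$ is (after translation) a member of $\mathcal{S}$, so that $S_1(\Gamma_{\mathbf 0},\mathcal{A}_f)$ is formally weaker than $S_1(\mathcal{S},\mathcal{A}_f)$; but a smaller first-coordinate family makes the selection hypothesis \emph{weaker}, so even if the inclusion held it would give $(4)\Rightarrow(3)$, the converse of what your cycle needs. Moreover the inclusion is false: a sequence converging to $\mathbf 0$ is not sequentially dense in $C_p(X)$ (its sequential closure is the sequence together with $\mathbf 0$), and conversely a sequentially dense set is not an element of $\Gamma_{\mathbf 0}$. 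The correct, still short, argument is a refinement in the first coordinate: given $f$ and $S_n\in\mathcal{S}$, sequential density provides $T_n\subseteq S_n$ with $T_n\in\Gamma_f$; by homogeneity of $C_p(X)$, statement $(3)$ gives $S_1(\Gamma_f,\mathcal{A}_f)$; applying it to $(T_n)$ yields $f_n\in T_n\subseteq S_n$ with $\{f_n:n\in\mathbb{N}\}\in\mathcal{A}_f$, which is exactly $(4)$, since any selector from the subsets $T_n$ is a selector from the original sets $S_n$. With that replacement your cycle $(1)\Rightarrow(2)\Rightarrow(3)\Rightarrow(4)\Rightarrow(1)$ coincides with the paper's proof (the paper additionally records the redundant implication $(3)\Rightarrow(2)$, which you do not need).
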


\begin{proof} $(1)\Rightarrow(2)$. Let $\{\mathcal{U}_i: i\in \mathbb{N}\}\subset \Gamma_F$, $\mathcal{U}_i=\{ U^{m}_i: m\in \mathbb{N}\}$
for each $i\in \mathbb{N}$. For each $i\in \mathbb{N}$ we consider
a set $\mathcal{S}_i=\{ f^m_i\in C(X) : f^m_i\upharpoonright
(X\setminus U^{m}_i)=1$ for $m \in \mathbb{N} \}$.

Since $\mathcal{U}_i$ is a $\gamma$-cover of cozero subsets of
$X$, then, by Lemma \ref{lemma}, $\mathcal{S}_i$ is a sequentially
dense subset of $C_p(X)$ for each $i\in \mathbb{N}$.

Since $C_p(X)$ satisfies $S_{1}(\mathcal{S},\mathcal{A})$, there
is a set $\{f^{m(i)}_{i}: i\in\mathbb{N}\}$ such that for each
$i$, $f^{m(i)}_{i}\in \mathcal{S}_i$, and $\{f^{m(i)}_{i}:
i\in\mathbb{N} \}$ is an element of $\mathcal{A}$.

Consider a set $\{U^{m(i)}_{i}: i\in \mathbb{N}\}$.

(a). $U^{m(i)}_{i}\in \mathcal{U}_{i}$.

(b). $\{U^{m(i)}_{i}: i\in \mathbb{N}\}$ is a cover of $X$.

Let $x\in X$ and $U=\langle {\bf{0}}, x, \frac{1}{2} \rangle$ be a
base neighborhood of $\bf{0}$, then there is
$f^{m(i)_{j_0}}_{i_{j_0}}\in U$ for some $j_0\in \mathbb{N}$. It
follows that $x\in  U^{m(i)_{j_0}}_{i_{j_0}}$. We thus get $X$
satisfies $S_{1}(\Gamma_F, \mathcal{O})$.

$(3)\Rightarrow(2)$. Let $\{\mathcal{U}_i\}\subset \Gamma_F$. For
each $i\in \mathbb{N}$ we consider the
 set $\mathcal{S}_i=\{ f\in C(X) : f\upharpoonright
F(U)=0$ and $f\upharpoonright (X\setminus U)=1$ for $U\in
\mathcal{U}_i \}$.

Since $\mathcal{F}_i=\{F(U): U\in \mathcal{U}_i\}$ is a
$\gamma$-cover of $X$, we have that $\mathcal{S}_i$ converge to
${\bf 0}$, i.e. $\mathcal{S}_i\in \Gamma_0$ for each $i\in
\mathbb{N}$.

Since  $C_p(X)$ satisfies $S_{1}(\Gamma_{\bf0},\mathcal{A}_f)$,
there is a sequence $\{f_{i}\}_{i\in\mathbb{N}}$ such that for
each $i$, $f_{i}\in \mathcal{S}_i$, and $\{f_{i} :
i\in\mathbb{N}\}\in \mathcal{A}_{\bf 0}$.

Consider $\mathcal{V}=\{U_i : U_i\in \mathcal{U}_i$ such that
$f_i\upharpoonright F(U_i)=0$ and $f_i\upharpoonright (X\setminus
U_i)=1\} $. Let $x\in X$ and $W=[x,(-1,1)]$ be a neighborhood of
$\bf{0}$, then there exists $i_0\in \mathbb{N}$ such that
$f_{i_0}\in W$ .

 It follows that
$x\in U_{i_0}$ and $\mathcal{V}\in \mathcal{O}$. We thus get $X$
satisfies $S_{1}(\Gamma_F, \mathcal{O})$.

$(2)\Rightarrow(3)$.  Fix $\{S_n : n\in \mathbb{N}\}\subset
\Gamma_0$. We renumber $\{S_n : n\in \mathbb{N}\}$ as $\{S_{i,j}:
i,j\in \mathbb{N}\}$.

For each $i,j\in \mathbb{N}$ and $f\in S_{i,j}$, we put
$U_{i,j,f}=\{x\in X : |f(x)|<\frac{1}{i+j}\}$, $Z_{i,j,f}=\{x\in X
: |f(x)|\leq\frac{1}{i+j+1}\}$.

Each $U_{i,j,f}$ (resp., $Z_{i,j,f}$) is a cozero-set (resp.,
zero-set) in $X$ with $Z_{i,j,f}\subset U_{i,j,f}$. Let
$\mathcal{U}_{i,j}=\{ U_{i,j,f} : f\in S_{i,j}\}$ and
$\mathcal{Z}_{i,j}=\{ Z_{i,j,f} : f\in S_{i,j}\}$. So without loss
of generality, we may assume $U_{i,j,f}\neq X$ for each $i,j\in
\mathbb{N}$ and $f\in S_{i,j}$.

We can easily check that the condition $S_{i,j}\in \Gamma_0$
implies that $\mathcal{Z}_{i,j}$ is a $\gamma$-cover of $X$.

Since  $X$ satisfies $S_{1}(\Gamma_F, \mathcal{O})$ for each $j\in
\mathbb{N}$ there is a sequence $\{U_{i,j,f_{i,j}} : i\in
\mathbb{N}\}$ such that for each $i$, $U_{i,j,f_{i,j}}\in
\mathcal{U}_{i,j}$, and $\{U_{i,j,f_{i,j}} : i\in \mathbb{N}\}\in
\mathcal{O}$. Claim that $\{f_{i,j}:i,j\in \mathbb{N}\}\in
\mathcal{A}_0$. Let $x\in X$, $\epsilon>0$, and $W=[x,(-\epsilon,
\epsilon)]$ be a base neighborhood of $\bf{0}$, then there exists
$j'\in \mathbb{N}$ such that $\frac{1}{1+j'}<\epsilon$. It follow
that there exists $i'$ such that $f_{i',j'}(x)\in (-\epsilon,
\epsilon)$. So $C_p(X)$ satisfies
$S_{1}(\Gamma_{\bf0},\mathcal{A}_f)$.

$(3)\Rightarrow(4)$ is immediate.

$(4)\Rightarrow(1)$. Let $S_n\in \mathcal{S}$ for each $n\in
\mathbb{N}$. We renumber $\{S_n\}_{n\in \mathbb{N}}$ as
$\{S_{i,j}\}_{i,j\in \mathbb{N}}$. Renumber the rational numbers
$\mathbb{Q}$ as $\{q_i : i\in \mathbb{N}\}$.  Fix
$i\in\mathbb{N}$. By the assumption there exist $f_{i,j}\in
S_{i,j}$ such that $\{f_{i,j} : j\in \mathbb{N}\}\in
\mathcal{A}_{q_i}$ where $q_i$  is the constant function to $q_i$.
Then $\{f_{i,j} : i,j\in \mathbb{N}\}\in \mathcal{A}$.

\end{proof}












\begin{corollary}\label{th173} Suppose that a space $X$ satisfies $S_{1}(\Gamma,
\mathcal{O})$. Then a space $C_p(X)$ satisfies
$S_{1}(\mathcal{S},\mathcal{A})$.
\end{corollary}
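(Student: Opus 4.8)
The plan is to reduce the corollary to the equivalence $(1)\Leftrightarrow(2)$ of Theorem~\ref{th173}: $C_p(X)$ satisfies $S_{1}(\mathcal{S},\mathcal{A})$ if and only if $X$ satisfies $S_{1}(\Gamma_F,\mathcal{O})$. Thus it suffices to prove the purely covering-theoretic implication $S_{1}(\Gamma,\mathcal{O})\Rightarrow S_{1}(\Gamma_F,\mathcal{O})$, and then the corollary is immediate.

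To establish that implication, I would start from a sequence $(\mathcal{U}_n:n\in\mathbb{N})$ of $\gamma_F$-shrinkable $\gamma$-covers of $X$. By definition each $\mathcal{U}_n$ is in particular a $\gamma$-cover of $X$ (the shrinkability by zero-sets is extra structure that is not needed here). Using the remark from the introduction that every $\gamma$-cover contains a countable $\gamma$-cover, I would choose for each $n$ a countable $\mathcal{U}_n'\subseteq\mathcal{U}_n$ which is still a $\gamma$-cover, so that $\mathcal{U}_n'\in\Gamma$. Applying the hypothesis $S_{1}(\Gamma,\mathcal{O})$ to the sequence $(\mathcal{U}_n':n\in\mathbb{N})$ produces $U_n\in\mathcal{U}_n'\subseteq\mathcal{U}_n$ with $\{U_n:n\in\mathbb{N}\}\in\mathcal{O}$; this is exactly a witness for $S_{1}(\Gamma_F,\mathcal{O})$ applied to the original sequence $(\mathcal{U}_n:n\in\mathbb{N})$. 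Hence $X$ satisfies $S_{1}(\Gamma_F,\mathcal{O})$, and by Theorem~\ref{th173} the space $C_p(X)$ satisfies $S_{1}(\mathcal{S},\mathcal{A})$.

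There is essentially no obstacle in this argument: the only things to check are that thinning a $\gamma_F$-shrinkable $\gamma$-cover to a countable $\gamma$-subcover is legitimate, and that a selection made inside the subcover is a fortiori a selection inside the original cover. One could alternatively give a direct proof imitating the implications $(2)\Rightarrow(3)\Rightarrow(4)\Rightarrow(1)$ of Theorem~\ref{th173} together with Lemma~\ref{lemma}, but routing through the already-established equivalence is shorter and cleaner.
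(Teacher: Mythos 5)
Your proposal is correct and is essentially the argument the paper intends: the corollary is stated without proof precisely because $S_{1}(\Gamma,\mathcal{O})$ trivially implies $S_{1}(\Gamma_F,\mathcal{O})$ (each member of $\Gamma_F$ is an open $\gamma$-cover, any countably infinite subfamily of which lies in $\Gamma$, and a selection from the subfamily is a selection from the original cover), after which Theorem~\ref{th173}, $(2)\Rightarrow(1)$, yields $S_{1}(\mathcal{S},\mathcal{A})$ for $C_p(X)$.
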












\bigskip

{\bf Acknowledgment.} I should like to Thanks to Boaz Tsaban for
useful discussions and recommendation of the publication of this
study.

\bigskip

\bibliographystyle{model1a-num-names}
\bibliography{<your-bib-database>}

\begin{thebibliography}{10}



\bibitem{arh}
A.V. Arhangel'skii, \textit{Hurewicz spaces, analytic sets and fan
tightness of function spaces}, Soviet Math. Dokl. 33, (1986),
396--399.





\bibitem{arh2}
A.V. Arhangel'skii, \textit{Topological function spaces}, Moskow.
Gos. Univ., Moscow, (1989), 223 pp. (Arhangel'skii A.V.,
\textit{Topological function spaces}, Kluwer Academic Publishers,
Mathematics and its Applications, 78, Dordrecht, 1992 (translated
from Russian)).










\bibitem{buk1}
L. Bukovsk$\acute{y}$, J. Hale$\breve{s}$, \textit{$QN$-spaces,
$wQN$-spaces and covering properties}, Topology and its
Applications, 154, (2007), p.848--858.

\bibitem{buk2}
L. Bukovsk$\acute{y}$, \textit{On $wQN_{*}$ and $wQN^*$ spaces},
Topology and its Applications, 156, (2008), p.24--27.




\bibitem{busu}
L. Bukovsk$\acute{y}$, J. $\check{S}$upina, \textit{Sequence
selection principles for quasi-normal convergence}, Topology and
its Applications, 159, (2012), p.283--289.

\bibitem{hur}
W. Hurewicz, \textit{$\ddot{U}$ber eine verallgemeinerung des
Borelschen Theorems}, Math. Z. 24 (1925) 401--421.




\bibitem{jmss}
W. Just, A.W. Miller, M. Scheepers, P.J. Szeptycki, \textit{The
combinatorics of open covers, II}, Topology and its Applications,
73, (1996), 241--266.

\bibitem{ko}
L. Ko$\check{c}$inac, \textit{Selected results on selection
principles}, in: Sh. Rezapour (Ed.), Proceedings of the 3rd
Seminar on Geometry and Topology, Tabriz, Iran, Jule 15-17,
(2004), p. 71--104.


\bibitem{kosc}
Lj.D.R. Ko$\check{c}$inac, M. Scheepers, \textit{Combinatorics of
open covers (VII): Groupability}, Fundamenta Mathematicae 179:2
(2003), 131--155.

\bibitem{mil}
A.W. Miller, D.H. Fremlin,  \textit{On some properties of
Hurewicz, Menger and Rothberger}, Fund. Math., 129,  (1988),
17–-33.




\bibitem{nob}
 N. Noble, \textit{The density character of functions spaces}, Proc. Amer. Math. Soc. (1974), V.42, is.I.-P., 228--233.




\bibitem{os1}
A.V. Osipov, \textit{Application of selection principles in the
study of the properties of function spaces}, Acta Math. Hungar.,
154(2), (2018), 362--377.


\bibitem{ospy}
A.V. Osipov, E.G. Pytkeev, \textit{On sequential separability of
functional spaces}, Topology and its Applications, 221, (2017), p.
270--274.

\bibitem{os2}
A.V. Osipov, \textit{Classification of selectors for sequences of
dense sets of $C_p(X)$}, Topology and its Applications, 242,
(2018), 20--32.



\bibitem{rot}
F. Rothberger, \textit{Eine Versch¨arfung der Eigenschaft C},
Fund. Math., 30,  (1938), 50-–55.


\bibitem{sak}
M. Sakai, \textit{Property $C^{''}$ and function spaces}, Proc.
Amer. Math. Soc. 104 (1988), 917-–919.



\bibitem{sash}
M. Sakai, M. Scheepers, \textit{The combinatorics of open covers},
Recent Progress in General Topology III, (2013), Chapter, p.
751--799.

\bibitem{sak2}
M.~Sakai, \textit{The projective Menger property and an embedding
of $S_{\omega}$ into function spaces}, Topology and its
Applications, Vol. 220 (2017) 118--130.





\bibitem{sch3}
M. Scheepers, \textit{Combinatorics of open covers (I): Ramsey
Theory}, Topology and its Applications, 69, (1996), p. 31--62.

\bibitem{sch4}
M. Scheepers, \textit{A sequential property of $C_p(X)$ and a
covering property of Hurewicz}, Proceedings of the American
Mathematical Society, 125, (1997), p. 2789--2795.

\bibitem{scheep}
M. Scheepers, \textit{$C_p(X)$ and Arhangelskii's
$\alpha_i$-spaces}, Topology and its Applications, 89, (1998),
p.265--275.

\bibitem{sch1}
M. Scheepers, \textit{Selection principles and covering properties
in topology}, Not. Mat., 22, (2003), p. 3--41.

\bibitem{bts1}
B. Tsaban, \textit{Strong $\gamma$-sets and other singular
spaces}, Topology and its Applications, 153, (2005) p. 620-–639.

\bibitem{bts}
B. Tsaban, \textit{Some new directions in infinite-combinatorial
topology}, in: J.Bagaria, S. Todor$\check{c}$evic (Eds.), Set
Theory, in: Trends Math., Birkh$\ddot{a}$user, (2006), p.
225--255.

\end{thebibliography}







\end{document}